\documentclass[11pt]{amsart}
\usepackage{amscd,amssymb,verbatim}
\theoremstyle{plain}
\newtheorem{Thm}{Theorem}[section]
\newtheorem{Cor}[Thm]{Corollary}
\newtheorem{Lem}[Thm]{Lemma}

\newtheorem{remark}{Remark}

\errorcontextlines=0
\numberwithin{equation}{section}

\begin{document}
\title[A new isomorphic \(\ell_1\) predual]{A new isomorphic \(\ell_1\) predual 
not isomorphic to 
a complemented subspace of a \(C(K)\) space}
\author{I. Gasparis}
\address{Department of Mathematics \\
Aristotle University of Thessaloniki \\
Thessaloniki, 54124 \\
Greece}
\email{ioagaspa@math.auth.gr}
\keywords{\(\ell_1\) predual, projection, Bourgain-Delbaen construction}
\subjclass{Primary: 46B03; Secondary: 47B38}
\begin{abstract}
We construct a subspace \(X\) of \(C(\omega^\omega)\) with dual isomorphic to
\(\ell_1\) and such that neither \(X\) embeds into \(c_0\), nor 
\(C(\omega^\omega)\) embeds into \(X\). As a consequence, \(X\) is not isomorphic
to a complemented subspace of a \(C(K)\) space. 
\end{abstract}
\maketitle
\section{Introduction}
Y. Benyamini and J. Lindenstrauss \cite{BL} constructed an isometric
\(\ell_1\) predual \(E\) which is not isomorphic to a complemented
subspace of a \(C(K)\) space.
D. Alspach and Y. Benyamini \cite{AB} showed, with a different proof, that a variation of \(E\)
had the same property. Alspach's quotient of \(C(\omega^\omega)\) which does not embed
into any \(C(\alpha)\), \(\alpha < \omega_1\) \cite{A2}, is an isometric \(\ell_1\) predual which
contains a complemented copy of \(E\) and so it is also not isomorphic to a complemented
subspace of a \(C(K)\) space.
We remark that \(E\) is isometric to a subspace of \(C(\omega^\omega)\) and that all
aforementioned examples contain a copy of \(C(\omega^\omega)\). 

The preceding examples are related to the problem of the isomorphic classification
of the complemented subspaces of \(C(K)\) spaces (\cite{R1}, \cite{R2}). 
By combining results from \cite{A1}, \cite{Be}, \cite{JZ} one obtains the following 
structural result for complemented subspaces of \(C(K)\) spaces with separable dual:  
\begin{Thm} \label{T}
Let \(Y\) be a complemented subspace of \(C(K)\) with \(Y^*\) separable. Then either \(Y\)
is isomorphic to \(c_0\), or \(C(\omega^\omega)\) embeds into \(Y\).
\end{Thm}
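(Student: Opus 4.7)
The plan is to exploit the two facts that follow immediately from the hypotheses: being complemented in a \(C(K)\) makes \(Y\) a (separable) \(\mathcal{L}_\infty\)-space, and combined with separability of \(Y^*\), the standard local theory of \(\mathcal{L}_\infty\)-spaces yields \(Y^* \cong \ell_1\). So \(Y\) is an isomorphic \(\ell_1\)-predual that is complemented in a \(C(K)\). The aim is to use the three cited results to promote this structural information into the desired dichotomy.

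First, I would invoke the Johnson--Zippin result \cite{JZ} to arrange that \(Y\) sits in \(C(\alpha)\) for some countable ordinal \(\alpha\). Since \(Y^*\) is separable, the Szlenk index \(Sz(Y)\) is a countable ordinal, which is what forces the compactum to be replaceable by a countable one; this already reduces the problem to the ``\(C(\alpha)\), \(\alpha < \omega_1\)'' setting, where the finer ordinal machinery works.

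Next, I would apply Alspach's theorem \cite{A1} that for a separable \(\mathcal{L}_\infty\)-space the Szlenk index is always of the form \(\omega^{\beta+1}\) for some countable \(\beta\); this imposes a rigid gap between the minimal case \(Sz(Y) = \omega\) and everything above it, where \(Sz(Y) \geq \omega^2\). In the low-complexity case \(Sz(Y) = \omega\), Benyamini's classification \cite{Be} of the isomorphism types of separable \(\mathcal{L}_\infty\)-spaces with Szlenk index \(\omega\) (equivalently, those isomorphic to subspaces of \(c_0\)) gives \(Y \cong c_0\). In the high-complexity case \(Sz(Y) \geq \omega^2\), an ordinal-index/Bessaga--Pelczynski argument inside \(C(\alpha)\) produces a subspace of \(Y\) isomorphic to \(C(\omega^\omega)\), completing the dichotomy.

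The main obstacle I expect is the last step, namely converting the lower bound \(Sz(Y) \geq \omega^2\) into an \emph{honest} embedding of \(C(\omega^\omega)\) into \(Y\) rather than merely into some iterated dual or quotient. Ordinal indices typically detect subspaces only up to biorthogonal-type approximations, and here one has to exploit both the \(\mathcal{L}_\infty\) structure of \(Y\) and the complementation of \(Y\) in \(C(K)\) to upgrade the index information to a genuine isomorphic copy; this is precisely where the combination of \cite{A1}, \cite{Be}, and \cite{JZ} is indispensable, as no single one of these ingredients alone suffices.
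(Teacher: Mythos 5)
Note first that the paper offers no proof of Theorem \ref{T}: it is recorded as a known consequence of \cite{A1}, \cite{Be} and \cite{JZ}, so your reconstruction must be judged on its own. The skeleton (a Szlenk-index dichotomy) is reasonable, but the roles you assign to the three references are essentially all misplaced, and both branches of your dichotomy, as you have formulated them, are not merely unproved but false. Your high-complexity branch asserts that a \(\mathcal{L}_\infty\) subspace of a countable \(C(\alpha)\) with Szlenk index at least \(\omega^2\) must contain \(C(\omega^\omega)\); the space \(X\) constructed in this very paper is a \(\mathcal{L}_\infty\) subspace of \(C(\omega^\omega)\) whose Szlenk index equals \(\omega^2\) (see the Remark following the Corollary) and which contains no copy of \(C(\omega^\omega)\). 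Your low-complexity branch rests on a ``classification by Benyamini of separable \(\mathcal{L}_\infty\)-spaces with Szlenk index \(\omega\)'' which does not exist: \cite{Be} is an extension theorem, and ``separable \(\mathcal{L}_\infty\) with Szlenk index \(\omega\) implies isomorphic to \(c_0\)'' is not a theorem you may cite. The opening reduction is also wrong twice over: \cite{JZ} says nothing about embedding into \(C(\alpha)\), and separability of \(Y^*\) does not permit replacing \(K\) by a countable compactum --- subspaces of \(C(\alpha)\), \(\alpha<\omega_1\), are \(c_0\)-saturated, which already excludes Bourgain--Delbaen preduals of \(\ell_1\), and even quotients of \(C(\omega^\omega)\) with separable dual can fail to embed into any \(C(\alpha)\), by \cite{A2}.

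The missing idea is that the dichotomy must be run through the projection \(P\colon C(S)\to Y\), not through isomorphic invariants of \(Y\) alone. The intended division of labour is: \cite{Be} (the extension theorem) shows that a separable complemented subspace of an arbitrary \(C(K)\) is complemented in some \(C(S)\) with \(S\) compact metric; Alspach's operator theorem \cite{A1} --- the one quoted in the paper's Remark --- then says that if the \(\epsilon\)-Szlenk index of \(P^*B_{Y^*}\) exceeds \(\omega\) for some \(\epsilon>0\), then \(P\) fixes a copy of \(C(\omega^\omega)\), whose image under \(P\) is a copy of \(C(\omega^\omega)\) inside \(Y\); in the remaining case the smallness of \(P^*B_{Y^*}\) realizes \(Y\) as a quotient of (a subspace of) \(c_0\), and it is here that \cite{JZ} enters, via the theorem that an infinite-dimensional \(\mathcal{L}_\infty\) quotient of a subspace of \((\sum G_n)_{c_0}\) is isomorphic to \(c_0\). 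You correctly flag the upgrade from index information to an honest copy of \(C(\omega^\omega)\) as the hard point, but you never supply the mechanism, which is precisely the fixing theorem applied to \(P\). Because your argument discards \(P\) after using it once to certify that \(Y\) is \(\mathcal{L}_\infty\), every subsequent step applies verbatim to spaces (the Bourgain--Delbaen spaces, or the \(X\) of this paper) for which the conclusion fails; no repair that retains only the isomorphic type of \(Y\) can succeed.
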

The proofs that the preceding examples of \(\ell_1\) preduals are not isomorphic to
complemented subspaces of \(C(K)\) spaces are rather hard and part of this difficulty
lies on the fact that they all contain copies of \(C(\omega^\omega)\). As a consequence, the
preceding structural result can not be used to distinguish between these \(\ell_1\) preduals
and the complemented subspaces of \(C(K)\) spaces. 

It follows directly from Theorem \ref{T}, that any isomorphic \(\ell_1\) predual not isomorphic to
\(c_0\) and not containing \(C(\omega^\omega)\) isomorphically, is not isomorphic to a 
complemented subspace of a \(C(K)\) space. The purpose of this article, as our next result
indicates, is to provide such an example of an \(\ell_1\) predual.
\begin{Thm} \label{mT}
There exists an isomorphic \(\ell_1\) predual \(X\) with a normalized, shrinking
basis \((e_n)\) satisfying the following properties:
\begin{enumerate}
\item \(X\) is isometric to a subspace of \(C(\omega^\omega)\).
\item Every normalized weakly null sequence in \(X\) admits a subsequence which is either
equivalent to the \(c_0\) basis, or equivalent to a subsequence of the natural basis
of Schreier's space. In particular, \((e_n)\) satisfies the second alternative.
\end{enumerate}
\end{Thm}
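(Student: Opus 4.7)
The plan is to realize $X$ as a Bourgain--Delbaen space whose evaluation functionals are indexed by a tree whose branching obeys the Schreier condition $|F|\le\min F$. Fix a constant $\theta\in(0,1/2)$ and inductively construct disjoint finite sets $(\Delta_n)$, setting $\Gamma_n=\Delta_1\cup\cdots\cup\Delta_n$ and $\Gamma=\bigcup_n\Delta_n$. To each new $\gamma\in\Delta_n$ I would attach a functional $c^*_\gamma\in\ell_\infty(\Gamma_{n-1})^*$ of one of two types: either $c^*_\gamma=e^*_\eta$ for some $\eta\in\Gamma_{n-1}$, or $c^*_\gamma=e^*_\eta+\theta\sum_{\xi\in F}e^*_\xi$ for a Schreier-admissible finite subset $F$ of $\Gamma_{n-1}$. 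The space $X$ is then the closed linear span of the canonical Bourgain--Delbaen basis $(d_\gamma)$ inside the inductive limit of $\bigl(\ell_\infty(\Gamma_n)\bigr)$, and enumerating the $d_\gamma$ by age produces the normalized bimonotone basis $(e_n)$ of the theorem.

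With $\theta<1/2$, the standard Bourgain--Delbaen estimates give that $X$ is an $\mathcal{L}_\infty$ space, that the biorthogonals $(e^*_n)$ span $X^*$ and are equivalent to the $\ell_1$-basis, and that $X$ contains no copy of $\ell_1$; together these yield $X^*\cong\ell_1$ and the shrinking property of $(e_n)$. For the isometric embedding into $C(\omega^\omega)$, I would show that the weak*-closure in $B_{X^*}$ of the norming set $K=\{c^*_\gamma:\gamma\in\Gamma\}\cup\{0\}$ is a countable compactum whose Cantor--Bendixson rank is bounded by $\omega^\omega$; the Schreier constraint $|F|\le\min F$ is used decisively here, since each derivation strips one Schreier level and the tree, collated across ages, accumulates rank at most $\omega^\omega$. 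Evaluation on $K$ then gives an isometric embedding of $X$ into $C(\omega^\omega)$.

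The core of the theorem is clause (2). For the basis $(e_n)$ itself, the form of the $c^*_\gamma$ immediately yields $\theta\sum_{n\in F}|a_n|\le\|\sum_n a_n e_n\|$ for all $F$ in the Schreier family contained in the support, and a matching upper bound on a suitable subsequence follows from the recursive definition of the norm, proving the Schreier alternative for $(e_n)$. For the dichotomy, I would take a normalized weakly null $(x_n)\subset X$, replace it by a block basis $(u_n)$ of $(e_n)$ up to a small perturbation, and perform a Ramsey extraction on the supports of the evaluating $c^*_\gamma$: either Schreier-admissible ``attacks'' of type-1 nodes persist along a subsequence, giving equivalence to a subsequence of the Schreier basis, or the supports disentangle so that only boundedly many blocks can be simultaneously captured by a single $c^*_\gamma$, giving $c_0$-equivalence. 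The main obstacle will be ruling out any intermediate behaviour, in particular block bases equivalent to repeated-average sequences of higher Schreier order; this is why the construction must be rigidly based on the original Schreier family $\mathcal{S}$ rather than any of the higher families $\mathcal{S}_\alpha$, and why a careful calibration of $\theta$ and of the admissibility condition on the type-1 nodes is required.
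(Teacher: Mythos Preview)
Your proposal has the right architecture --- a Bourgain--Delbaen space whose norming functionals are constrained by the Schreier family --- but it contains two genuine gaps, and the paper's route around the second one is worth knowing.

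First, your Cantor--Bendixson estimate is stated incorrectly. To get an isometric embedding into \(C(\omega^\omega)\) you need the \(\omega\)-th derivative of the norming compactum to be at most a singleton, i.e.\ Cantor--Bendixson rank at most \(\omega+1\). ``Rank bounded by \(\omega^\omega\)'' is far too weak and would only yield an embedding into \(C(\omega^{\omega^\omega})\) or thereabouts. The paper computes the rank explicitly (its Lemma~3.2): the key is that the functionals take values in a countable set \(R\) accumulating only at \(0\), and one shows \((R\ast\mathcal{F})^{(\omega)}=\{\vec 0\}\) by first proving \((R\ast\mathcal{A}_n)^{(n)}=\{\vec 0\}\) for each \(n\).

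Second, and more seriously, the ``main obstacle'' you flag --- ruling out intermediate behaviour and obtaining a matching Schreier upper bound directly from the recursive norm --- is exactly what the paper avoids rather than confronts. The paper's dichotomy (its Lemma~3.3) is not a Ramsey analysis of functional supports; it is the elementary split on whether \(\|u_n\|_{c_0}\to 0\). If yes, a simple counting argument using \(|\mathrm{supp}\,d^*|\le\min\mathrm{supp}\,d^*+2\) gives \(c_0\)-equivalence. If \(\|u_n\|_{c_0}\ge\delta\), one builds nested linked-pair functionals witnessing an \(\ell_1\) spreading model. The passage from ``\(\ell_1\) spreading model'' to ``equivalent to a subsequence of the Schreier basis'' is then not proved internally at all: it is imported from the result of Gasparis--Odell--Wahl that any normalized weakly null sequence in \(C(\omega^\omega)\) with \(\ell_1\) spreading model already has this property. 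Since \(X\) has been embedded isometrically into \(C(\omega^\omega)\), the Schreier upper bound comes for free. Your proposal to extract it directly from the BD norm would require substantially more work, and as written there is no argument offered for it.

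A smaller point: the paper's functionals are not of your flat type \(e^*_\eta+\theta\sum_{\xi\in F}e^*_\xi\); they are recursively built as \(d^*=d_1^*+b\,(d_2^*\,|\,\bigcup_{j=k+1}^{l}\Delta_j)+e_i^*\), i.e.\ one full prior functional plus a restricted tail of another plus a fresh coordinate. This ``linked pair'' structure is what drives both the \(\mathcal{L}_\infty\) verification (via a dual-BD criterion, Theorem~2.1) and the \(\ell_1\)-spreading-model construction.
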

Recall that Schreier's space \cite{Sch} is the completion of \(c_{00}\) under the norm
\[ \|x\| = \sup \{ \sum_{i \in F} |x(i)|: \, |F| \leq \min F \} \]
The unit vector basis of \(c_{00}\) then becomes the natural Schauder basis of Schreier's space.
\begin{Cor}
Neither \(X\) embeds into \(c_0\), nor \(C(\omega^\omega)\) embeds into \(X\).
Therefore, \(X\) is not isomorphic to a complemented subspace of a \(C(K)\) space.
\end{Cor}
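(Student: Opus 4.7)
The strategy is to derive all three conclusions from Theorem~\ref{mT} together with Theorem~\ref{T}. Since \(X^\ast \cong \ell_1\) is separable, Theorem~\ref{T} applies to any complemented embedding of \(X\) into a \(C(K)\) space: such an embedding would force \(X \cong c_0\) or \(C(\omega^\omega) \hookrightarrow X\). Hence the third conclusion of the corollary follows automatically from the first two, and the task reduces to establishing \(X \not\hookrightarrow c_0\) and \(C(\omega^\omega) \not\hookrightarrow X\).

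For the first non-embedding, I would argue by contradiction. An isomorphic embedding \(T \colon X \to c_0\) would map \((e_n)\) to a normalized weakly null sequence in \(c_0\), which by a routine gliding-hump argument admits a subsequence equivalent to the unit vector basis of \(c_0\). Pulling back, some subsequence of \((e_n)\) would be \(c_0\)-equivalent. But by Theorem~\ref{mT}(2), \((e_n)\) is equivalent to a subsequence \((s_{n_k})\) of the Schreier basis, and a direct estimate using the Schreier norm shows \(\bigl\|\sum_{k \le N} s_{n_k}\bigr\|_S \to \infty\) as \(N \to \infty\); this growth is inherited by any further subsequence and is incompatible with \(c_0\)-equivalence.

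For the second non-embedding, I would again argue by contradiction. The key point is that \(C(\omega^\omega)\) supports normalized weakly null sequences whose ordinal complexity (measured, for example, by the Schreier or Bourgain \(\ell_1\) index) strictly exceeds that of any subsequence of the Schreier basis. An isomorphic copy of \(C(\omega^\omega)\) inside \(X\) would produce such a sequence \((f_n)\) in \(X\); Theorem~\ref{mT}(2) would then force a subsequence of \((f_n)\) to be equivalent either to the \(c_0\)-basis or to a subsequence of the Schreier basis, and both alternatives can be ruled out by an index comparison, since the relevant index is monotone under passage to subsequences and preserved under isomorphic embedding.

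The main technical obstacle is the second step: one must pin down the correct weakly null sequence in \(C(\omega^\omega)\) and the correct ordinal index, and verify the monotonicity and embedding invariance of that index in order to rule out equivalence with \emph{any} subsequence of the Schreier basis. The first non-embedding is, by contrast, essentially soft, resting only on the structure of weakly null sequences in \(c_0\) and on the unbounded growth of the Schreier norm on partial sums of its basis.
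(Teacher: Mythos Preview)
Your overall strategy---reduce to the first two non-embeddings via Theorem~\ref{T}, then establish each separately---is exactly the paper's. For the first non-embedding your argument is essentially the paper's, modulo one slip: Theorem~\ref{mT}(2) only asserts that \emph{some subsequence} of \((e_n)\) is equivalent to a Schreier subsequence, not \((e_n)\) itself. This is harmless: apply the \(c_0\) gliding-hump argument to that Schreier-like subsequence rather than to \((e_n)\), and the rest goes through as you wrote.

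For the second non-embedding the paper takes a different and considerably shorter route than your index comparison. It observes that Theorem~\ref{mT}(2) forces every spreading model of \(X\) to be either \(c_0\) or \(\ell_1\) (since subsequences of the Schreier basis generate \(\ell_1\) spreading models), whereas \(C(\omega^\omega)\) is known to be universal for unconditional spreading models---in particular it admits spreading models not equivalent to either the \(c_0\) or the \(\ell_1\) basis. This immediately blocks any embedding of \(C(\omega^\omega)\) into \(X\). Your Bourgain/Schreier index approach is plausible and would likely succeed once you exhibit a concrete weakly null sequence in \(C(\omega^\omega)\) (say, one generated by an \(\mathcal{S}_2\)-type family) whose \(\ell_1\)-index strictly exceeds that of every Schreier subsequence and survives passage to subsequences; but as you yourself note, this demands real work to pin down, whereas the spreading-model argument is a one-liner given the cited universality result.
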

\begin{proof}
Since \((e_n)\) has a a subsequence equivalent to a subsequence of the natural
basis of Schreier's space, and since none of these subsequences is equivalent
to the \(c_0\) basis, we infer that \(X\) is not isomorphic to a subspace of \(c_0\).

One consequence of Theorem \ref{mT} is that the only spreading models of \(X\)
are \(c_0\) and \(\ell_1\). On the other hand, it is not hard to see that 
\(C(\omega^\omega)\) admits spreading models which are not equivalent to the 
\(c_0\) or to the \(\ell_1\) bases.
In fact, it is well known (\cite{O}, \cite{AK}), that \(C(\omega^\omega)\)
is a space universal for all unconditional spreading models. It follows now
that \(C(\omega^\omega)\) is not isomorphic to a subspace of \(X\).

Finally, we deduce from the above and Theorem \ref{T} that 
\(X\) is not isomorphic to a complemented subspace of a \(C(K)\) space.
\end{proof}
\begin{remark}
It was pointed out to us by D. Alspach, that \(X\) is not even isomorphic to
a quotient of a \(C(K)\) space with \(K\) being countable and compact.
Indeed, let \(Q \colon C(K) \to X\) be a bounded linear surjection, where \(K\)
is countable. It is shown in \cite{AJO} that the Szlenk index \cite{Szl} of any subspace
of Schreier's space spanned by a subsequence of the natural basis equals the
Szlenk index of \(C(\omega^\omega)\), which in turn equals \(\omega^2\) \cite{Sam}.
It follows that the Szlenk index of \(X\) also equals \(\omega^2\) and therefore, as
\(K\) is countable, there is some \(\epsilon > 0\) such that the \(\epsilon\)-Szlenk index of
\(Q^* B_{X^*}\) exceeds \(\omega\). One deduces from the main result of \cite{A1}
that \(Q\) is an isomorphism on some subspace isomorphic to \(C(\omega^\omega)\),
contradicting Theorem \ref{mT}. Combining this result with that of \cite{Be} we conclude that \(X\)   
is not isomorphic to a complemented subspace of any \(C(K)\) space.
\end{remark}
The construction of \(X\) requires a new method for producing \(\mathcal{L}_\infty\)
spaces (\cite{LP}, \cite{LR}). More precisely we use a dual version
of the Bourgain-Delbaen method \cite{BD}. The latter has been instrumental to some
recent developments in Banach space theory, namely the solution of the scalar-plus-
compact problem \cite{AH} and the universality of the class of \(\ell_1\) 
preduals among spaces with separable dual \cite{FOS}. Theorem \ref{T1} gives sufficient
conditions on a norming subset of the dual ball of a Banach space with a basis, in order for
the space to be \(\mathcal{L}_\infty\).

We use standard Banach space facts and terminology, as may be found in \cite{LT}.
By a subspace of a Banach space we shall always mean an infinite-dimensional, closed subspace.
If \(\alpha < \omega_1\) is a countable ordinal, then \(C(\alpha)\) stands for
\(C(K)\) where \(K\) is the ordinal interval \([1, \alpha]\) endowed with the order topology. 
\section{A criterion for recognizing \(\mathcal{L}_\infty \) spaces}
{\bf Notation}. Let \(X\) be a Banach space with a normalized Schauder basis
\((e_n)\) and let \(D \subset B_{X^*}\) be a norming set for \(X\) 
(that is, \(\|x \| = \sup_{d^* \in D } |d^*x|\) for all \(x \in X\)) so that
\(D \subset < e_n^* : \, n \in \mathbb{N}> \setminus  \{0\}\). Assume that
\(e_n^* \in D\) for all \(n \in \mathbb{N}\) and that \(|d^*(e_n)| \leq 1\)
for all \(d^* \in D\) and all \(n \in \mathbb{N}\).

We also consider a sequence \(\Delta_1 < \Delta_2 < \cdots < \Delta_n < \cdots\)
of successive finite intervals of \(\mathbb{N}\) whose union is \(\mathbb{N}\).
Assume that \(|\mathrm{ supp } \, d^* \cap \Delta_n | \leq 1\)
for all \(d^* \in D\) and all \(n \in \mathbb{N}\).

We set \(D_n = \{ d^* \in D : \, \max \mathrm{ supp } \, d^* \in \Delta_n \}\)
for all \(n \in \mathbb{N}\). Thus, \(D = \cup_n D_n\). 

We finally let \(P_n\) denote the basis projection onto
\([e_i : \, i \in \cup_{k=1}^n \Delta_k ]\).
\begin{Thm} \label{T1}
Let \(X\), \((e_n)\), \(D\) and \((\Delta_n)\) be as above.
Let \( 0 < b < 1/4\). Assume that the following properties hold for all \(n \geq 3\):
\begin{enumerate}
\item \(D_n = \{\gamma_i^* : \, i \in \Delta_n \} \cup \{e_i^* : \, i \in \Delta_n \}\),
where
for each \(i \in \Delta_n\),
\( | \mathrm{ supp } \, \gamma_i^* | > 1\), \(\max \mathrm{ supp } \, \gamma_i^* = i\)
and \(\gamma_i^*e_i = 1\).
\item Each \(d^* \in D_n\) admits a representation of the form
\[ d^* = \rho_1 d_1^* + \rho_2( d_2^* | \cup_{j=k+1}^l \Delta_j )+ e_i^*\]
where, \(d_1^* \in D_k\) and \(d_2^* \in D_l\) for some \(1 \leq k < l \leq n-1\),
\(i \in \Delta_n\) and \(|\rho_1 | \leq 1\), \(|\rho_2 | \leq b\).
\end{enumerate}
Then, \(X\) is an \(\mathcal{L}_{\infty}\) space. 
Moreover, letting 
\(b_i^* = e_i^*\) for \(i \in \Delta_1 \cup \Delta_2\), while if \(n \geq 3\)
and \(i \in \Delta_n\),
\[b_i^* = (1/2) \gamma_i^* P_{n-1} + e_i^*\] 
then \((b_i^*)\) is equivalent to the \(\ell_1\) basis and \([(b_i^*)]=[(e_i^*)]\).
Hence, if \((e_n)\) is shrinking then \(X^*\) is isomorphic to \(\ell_1\).
\end{Thm}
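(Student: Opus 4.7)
The plan is to separate the statement into three assertions and address them in turn: \textbf{(a)} $X$ is $\mathcal{L}_\infty$; \textbf{(b)} $(b_i^*)$ is equivalent to the unit vector basis of $\ell_1$; \textbf{(c)} $[(b_i^*)]=[(e_i^*)]$. The final sentence is then immediate: if $(e_n)$ is shrinking, $[(e_i^*)] = X^*$, so (b) together with (c) yield $X^*\cong\ell_1$.

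Assertion (c) is a triangular-inversion argument. For $i\in\Delta_n$ with $n\geq 3$, the difference $b_i^*-e_i^* = (1/2)\gamma_i^*P_{n-1}$ is supported on $\bigcup_{r<n}\Delta_r$ and hence lies in $\operatorname{span}\{e_j^*:n(j)<n\}$, where $n(j)$ denotes the level of $j$. Ordering indices by level, the change-of-basis matrix between $(e_i^*)$ and $(b_i^*)$ is block-lower-triangular with identity diagonal blocks, and is inverted by a finite Neumann expansion. So finite linear combinations in either basis convert to finite linear combinations in the other, and the closed spans coincide.

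For assertion (a), I would show that each $P_nX$, with coordinate basis $(e_i)_{i\in\bigcup_{r\leq n}\Delta_r}$, is $C$-isomorphic to $\ell_\infty^{\dim P_nX}$ with $C$ independent of $n$. The easy direction $\|x\|_\infty\leq \|x\|$ is immediate from $e_i^*\in B_{X^*}$. For the reverse, fix $x\in P_nX$ with $\|x\|_\infty\leq 1$ and any $d^*\in D_m$ and iterate representation~(2) on the first factor $d_1^*$ down to a base level at most $2$, producing the expansion
\[
d^* = \bigl(\prod_{s<R}\rho_1^{(s)}\bigr) d_1^{(R)*} + \sum_{r<R}\bigl(\prod_{s<r}\rho_1^{(s)}\bigr)\bigl[\rho_2^{(r)} d_2^{(r)*}|_{J_r} + e_{i_r}^*\bigr].
\]
The key structural observation is that the ranges $J_r = \bigcup_{s=n_{r+1}+1}^{l_r}\Delta_s$ arising along the iteration are pairwise disjoint (because $n_{r+1}<l_r<n_r$ along the chain), as are the singletons $\{i_r\}$ (each at a distinct level). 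This disjointness permits bundling the $d_2^{(r)*}|_{J_r}$-pieces into a single functional and the $e_{i_r}^*$-pieces into another, whose evaluations on $x$ are then controlled by the factors $|\prod_s\rho_1^{(s)}|\leq 1$, $|\rho_2^{(r)}|\leq b<1/4$, and the basis constant $K$ of $(e_n)$, yielding $|d^*(x)|\leq C\|x\|_\infty$ for $C = C(b,K)$.

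For assertion (b), the upper bound $\|\sum a_ib_i^*\|\leq(1+K/2)\sum|a_i|$ follows from $\|b_i^*\|\leq 1+K/2$ and the triangle inequality. For the lower bound, given a finitely supported sequence $(a_i)$ I would construct a test vector $x\in X$ of bounded norm such that $b_i^*(x)=\operatorname{sign}(a_i)$ throughout the support of $a$; the coefficients of $x$ in $(e_j)$ solve the triangular linear system coming from the matrix described in (c), and the uniform bound $\|x\|\leq C$ follows from (a). Pairing with $\sum a_ib_i^*$ then yields $\|\sum a_ib_i^*\|\geq (1/C)\sum|a_i|$. The principal technical difficulty lies in (a): the naive $\ell_1$-size of a restriction $d^*|_{P_nX}$ to coordinates grows geometrically in the level of $d^*$, so the saving feature must come from the pairwise disjointness of the $J_r$ (and of the $\{i_r\}$) along the iteration, which allows many high-$\ell_1$-mass pieces to be consolidated into one functional of bounded $X^*$-norm.
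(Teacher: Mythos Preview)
Your treatment of (c) is fine and matches the paper's. The difficulty is in (a), and it propagates to (b).

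The assertion you try to prove in (a) --- that the coordinate basis $(e_i)_{i\in\bigcup_{r\le n}\Delta_r}$ makes $P_nX$ into $\ell_\infty^{\dim P_nX}$ with a uniform constant --- is \emph{false} under the hypotheses of the theorem. Look at your own expansion: iterating (2) on the $d_1^*$-side produces, besides the $\rho_2^{(r)}d_2^{(r)*}|_{J_r}$ terms, the terms $\bigl(\prod_{s<r}\rho_1^{(s)}\bigr)e_{i_r}^*$ with coefficients of modulus possibly equal to $1$ (nothing in the hypotheses forces $|\rho_1|<1$; in the paper's own construction $\rho_1=1$ throughout). The indices $i_r$ sit at pairwise distinct levels, but there can be as many as $m-2$ of them when $d^*\in D_m$, so the bundled functional $\sum_r c_r' e_{i_r}^*$ has $\ell_1$-norm (and $X^*$-norm) of order $m$. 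Concretely, take $d^*\in D_m$ built by always choosing $\rho_1=1$ and set $x=\sum_r e_{i_r}$; then $\|x\|_\infty=1$ while $|d^*(x)|$ is of order $m$. So $\sup_{x\in P_nX}\|x\|/\|x\|_\infty$ is unbounded in $n$, and the direction you call ``the reverse'' simply fails. The disjointness of the $J_r$ controls the $d_2$-pieces only if you already know how to bound $|d_2^{(r)*}(x|_{J_r})|$ by $\|x\|_\infty$ --- which is exactly the estimate you are trying to prove, so the argument is circular there as well.

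The paper does \emph{not} claim that $(e_i)$ is an $\ell_\infty$-basis for $P_nX$. Instead it defines, by a Bourgain--Delbaen-type recursion, operators $T_n:\ell_\infty(\bigcup_{k\le n}\Delta_k)\to P_nX$ via
\[
T_{n+1}x = T_n\pi_n x + \sum_{i\in\Delta_{n+1}}\bigl[x(i)-\tfrac12\gamma_i^*T_n\pi_n x\bigr]e_i,
\]
and proves $A\|x\|_\infty\le\|T_nx\|\le B\|x\|_\infty$ with $A,B$ independent of $n$. The upper bound $\|T_n\|\le\lambda$ is obtained by an induction that simultaneously tracks $\|d^*T_n\|$ and $\|d^*(I-P_m)T_n\|$; the splitting property (2) enters through the $(I-P_m)$-estimate, where the factor $b<1/4$ produces a convergent geometric series. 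This is what replaces your failed ``bundling''. The same operators give (b) for free: $(b_i^*)$ turns out to be biorthogonal to $(T_ne_i)$, so $\|\sum a_ib_i^*\|\ge\lambda^{-1}\sum|a_i|$ follows from $\|T_n\|\le\lambda$. Your test-vector idea for (b) is essentially the same biorthogonality, but you justified the norm bound on the test vector by appealing to your incorrect version of (a); the correct justification is precisely the operator bound on $T_n$.
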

\begin{proof}
For each \(n \geq 2\) we define linear maps
\(T_n \colon \ell_\infty(\cup_{k=1}^n \Delta_k) \to [e_i : \, i \in \cup_{k=1}^n \Delta_k]\)
as follows: 
\[T_2 x = \sum_{i=1}^m x(i) e_i\]
where \(m = \max \Delta_2\) and inductively,
\[T_{n+1} x = T_n \pi_n x + \sum_{i \in \Delta_{n+1} } 
[ x(i) - (1/2)\gamma_i^* T_n \pi_n x] e_i \]
where \(\pi_n \colon \ell_\infty \to \ell_\infty\) is the restriction operator to the
first \(\cup_{k=1}^n \Delta_k\) coordinates and \(\gamma_i^*\) is the unique element of \(D\)
whose support contains at least two points and \(i\) is the maximum of this support.
It is clear that
\[ P_m T_n x = T_m \pi_m x\]
whenever \(m \leq n\) and \(x \in \ell_\infty(\cup_{k=1}^n \Delta_k)\).

It will suffice showing that there exist absolute constants \(0 < A < B\) so that
\[A \|x\|_\infty \leq \|T_n x \| \leq B \|x\|_\infty\]
for all \(x \in \ell_\infty(\cup_{k=1}^n \Delta_k)\) and \(n \geq 2\).   

Let \(\rho = (1/2)[ 1 + 3b/(1-b)]\). Then \( 1/2 < \rho < 1\) as \( 0 < b < 1/4\).
We choose \( \lambda  > 0 \) such that
\(\|T_2 \| \leq 1 + \lambda /2\), \(\|(I - P_1) T_2 \| \leq 1 + 3\lambda /2\)
and \(\lambda > (1- \rho)^{-1} (1- b)^{-1}\).

We first show by induction on \(n \geq 2\) that for all 
\(x \in \ell_\infty(\cup_{k=1}^n \Delta_k)\), \(\|x\|_\infty =1\), there exist
\(d^* \in \cup_{k=1}^n D_k\) and an initial interval \(I\) of \(\mathbb{N}\)
so that
\[|(d^* | I)(T_n x) | \geq 1/2\]
The assertion trivially holds for \(n=2\), as \(e_j^* \in D_k\) whenever
\(j \in \Delta_k\) for some \(k \leq 2\). Assuming the assertion true for some \(n \geq 2\),
let \(x \in \ell_\infty(\cup_{k=1}^{n +1} \Delta_k)\) with \(\|x\|_\infty =1\) and choose
\(k \leq n+1 \) and \(i_0 \in \Delta_k\) so that \(|x(i_0)| = 1\).
In case \(k \leq n\), the assertion follows from the induction hypothesis as
\(P_n T_{n+1}x = T_n \pi_n x\). If \(i_0 \in \Delta_{n+1}\), we distinguish between
two cases. The first one occurs when \(|\gamma_{i_0}^* T_n \pi_n x | \geq 1\).
In this case, \(d^* = \gamma_{i_0}^* \) and \( I = \cup_{k=1}^n \Delta_k \) do the job.
The second case occurs when  \(|\gamma_{i_0}^* T_n \pi_n x | < 1\). Now,
\(d^* = \gamma_{i_0}^* \) and \( I = \cup_{k=1}^{n + 1} \Delta_k \) fulfill the assertion.
This is so since \(\mathrm{ supp } \gamma_{i_0}^* \cap \Delta_{n+1} = \{ i_0\} \).

This completes the inductive step and hence the assertion holds for all \(n \geq 2\).
It follows now that \(\|T_n x \| \geq (2C)^{-1} \|x\|_{\infty}\) for all
\(x \in \ell_\infty(\cup_{k=1}^n \Delta_k)\) and \(n \geq 2\), where \(C\)
stands for the basis constant of \((e_n)\). This yields the required lower estimate.

To obtain the upper estimate,
we show by induction on \(n \geq 2\) that the following properties hold:
\begin{enumerate}
\item \(\|d^* T_n \| \leq 1 + \lambda/2\), for all \(d^* \in \cup_{k=1}^n D_k\).
\item \(\|d^* (I - P_m) T_n \| \leq 1 + 3\lambda/2\), for all \(d^* \in \cup_{k=1}^n D_k\)
and \(m \leq n\).
\item For every \(d^* \in D\) and all \(m \leq n\) there exists \(l > 0\) so that
\[ \|d^* ( I - P_m ) T_n \| \leq (1 + 3 \lambda / 2) \sum_{k=0}^l b^k \]
\item \(\|T_n \| \leq \lambda\).
\end{enumerate}
Property \((4)\) above, clearly implies the required upper estimate.
We prove, by induction on \(n \geq 2\), that all four properties, above, hold.
By the choices made initially and since \(\lambda > 2\), this is clear when \(n =2 \).
For the inductive step we assume that all four properties hold for \(k \leq n\) and
then prove they are also valid for \(n +1\). 

We first show that \((1)\) holds for \(n+1\). Let \(d^* \in D_k\) for some \(k \leq n+1\).
In case \(k \leq n\) then \(d^* T_{n+1} x = d^* T_k \pi_k x\) for all
\(x \in \ell_\infty(\cup_{j=1}^{n + 1} \Delta_j)\) with \(\|x\|_\infty \leq 1\)
and the assertion follows from
the induction hypothesis. We next assume that \(k = n+1\). Let
\(x \in \ell_\infty(\cup_{j=1}^{n + 1} \Delta_j)\) with \(\|x\|_\infty \leq 1\).
We distinguish between two cases.
The first case occurs when \(d^* = e_i^*\) for some \(i \in \Delta_{n+1}\).
Then, 
\[|d^*T_{n+1}x| = |x(i) -  (1/2) \gamma_i^*T_n \pi_n x | \leq 1 + \lambda / 2\]
by \((4)\) of the induction hypothesis applied on \(T_n\).
In the second case, \(d^* = \gamma_i^*\) for some \(i \in \Delta_{n+1}\). Now,
\[|d^*T_{n+1}x |= |x(i) + (1/2) \gamma_i^*T_n \pi_n x | \leq 1 + \lambda /2\]
and the assertion follows again from \((4)\) of the induction hypothesis applied on \(T_n\).
This completes the inductive step for \((1)\).

We next establish \((2)\) for \(n+1\). 
Let \(d^* \in D_k\) for some \(k \leq n+1\). If \(k \leq m\), the assertion is trivial.
So we assume that \(m < k\). 
If \(k \leq n\), then
\[ d^*(I-P_m)T_{n+1}x = d^*(I-P_m) T_k \pi_k x\]
for all \(x \in \ell_\infty(\cup_{j=1}^{n + 1} \Delta_j)\) with \(\|x\|_\infty \leq 1\)
and the assertion follows from \((2)\)
of the induction hypothesis applied on \(T_k\).

When  \(k = n+1\), let
\(x \in \ell_\infty(\cup_{j=1}^{n + 1} \Delta_j)\) with \(\|x\|_\infty \leq 1\).
Again, there are two possibilities.
If \(d^* = e_i^*\) for some \(i \in \Delta_{n+1}\), then
\[|d^*(I - P_m )T_{n+1}x| = |x(i) - (1/2) \gamma_i^*T_n \pi_n x| \leq 1 + \lambda/2\]
by \((4)\) of the induction hypothesis applied on \(T_n\).

The other possibility is to have \(d^* = \gamma_i^*\) for some \(i \in \Delta_{n+1}\).
We once again have that     
\[d^*T_{n+1}x = x(i) + (1/2) \gamma_i^*T_n \pi_n x\]
while, \(d^*P_m T_{n+1}x = \gamma_i^* T_m \pi_m x \) whence,
\[|d^*(I - P_m )T_{n+1} x| = |x(i) + (1/2) \gamma_i^*T_n \pi_n x - \gamma_i^* T_m \pi_m x | 
\leq 1 + \frac{\lambda}{2} + \lambda = 1 + 3 \lambda/2\]
by \((4)\) of the induction hypothesis applied on \(T_n\) and \(T_m\). So \((2)\) is proved
for \(n+1\).

We now show that \((3)\) holds for \(n+1\). Since \((2)\) has been already verified for
\(n+1\), it will suffice establishing \((3)\) for all \(d^* \in \cup_{k = n+1}^\infty D_k \).
To accomplish this we use induction on \(k \geq n+1\) to prove that every
\(d^* \in D_k\) satisfies \((3)\). When \(k = n+1\), this follows from \((2)\).
Assume the assertion holds for every \(d^* \in D_s\) and all \(n+1 \leq s < k\)
and let \(d^* \in D_k\). By the splitting property of the elements of \(D\) we may write
\[ d^* = \rho_1 d_1^* + \rho_2 d_2^* (I - P_s) + e_i^*\]
where, \(d_1^* \in D_s\) and \(d_2^* \in D_t\) for some \(1 \leq s < t \leq k-1\),
\(i \in \Delta_k\) and \(|\rho_1 | \leq 1\), \(|\rho_2 | \leq b\).
If \( s \geq n+1\) the assertion for \(d^*\) follows by the induction hypothesis
applied on \(d_1^*\). If \( s \leq m\) then the assertion for \(d^*\) follows by the
induction hypothesis applied on \(d_2^*\). We now assume that \(m < s < n+1\). Applying
\((2)\) on \(d_1^*\) we obtain that
\(\|d_1^*(I - P_m) T_{n+1} \| \leq 1 + 3 \lambda /2\).
On the other hand, the induction hypothesis applied on \(d_2^*\) yields some integer
\(l > 0\) so that
\(\|d_2^*(I - P_s) T_{n+1} \| \leq (1 + 3 \lambda /2) \sum_{j=0}^l b^j \).
Using the fact that \((I - P_s)(I - P_m) = I - P_s\), we infer that
\[\|d_2^*(I - P_s)(I -P_m) T_{n+1} \| \leq (1 + 3 \lambda /2) \sum_{j=0}^l b^j \]
whence
\[\|d^*(I - P_m) T_{n+1} \| \leq |\rho_1| (1 + 3 \lambda /2) + b (1 + 3 \lambda /2) \sum_{j=0}^l b^j \]
and so \(\|d^*(I - P_m) T_{n+1} \| \leq (1 + 3 \lambda /2) \sum_{j=0}^{l+1} b^j\),
proving \((3)\) for \(n+1\).

To show \((4)\) holds for \(n+1\), it suffices to prove by induction on \(k \)
that \(\|d^* T_{n+1} \| \leq \lambda\) for all \(d^* \in D_k\).
When \(k \leq n+1\) this follows from \((1)\) which has been already shown to hold for \(n+1\). 
Let \(k > n+1\) and 
assume the assertion holds for all \(d^* \in D_s\), \(s < k\). Suppose that \(d^* \in D_k\). 
We may write
\[ d^* = \rho_1 d_1^* + \rho_2 d_2^* (I - P_s) + e_i^*\]
where, \(d_1^* \in D_s\) and \(d_2^* \in D_t\) for some \(1 \leq s < t \leq k-1\),
\(i \in \Delta_k\) and \(|\rho_1 | \leq 1\), \(|\rho_2 | \leq b\).
In case \(s \geq n+1\) the assertion for \(d^*\) follows by the induction hypothesis
applied on \(d_1^*\). 

If \(s < n+1\), then \(\|d_1^* T_{n+1} \| \leq 1 + \lambda /2\) because of \((1)\).
On the other hand, \((3)\) implies that
\[\|d_2^* (I - P_s) T_{n+1}\| \leq (1 + 3 \lambda /2) \sum_{j=0}^l b^j\]
for some positive integer \(l\).
We conclude that
\begin{align}
\|d^* T_{n + 1} \| &\leq 1 + \lambda /2 + b (1 + 3 \lambda /2) \sum_{j=0}^l b^j \notag \\ 
&< 1 + \lambda /2 + (1 + 3 \lambda /2)b (1-b)^{-1} \notag \\
&= (1-b)^{-1} + \rho \lambda < \lambda \notag
\end{align}
This completes the inductive step.

Finally, for the moreover assertion, we show by induction on \(n \in \mathbb{N}\)
that \((b_i^*)_{i \in \cup_{k=1}^n \Delta_k}\)
acts biorthogonally onto \((T_n e_i)_{i \in \cup_{k=1}^n \Delta_k}\) 
(where, \(T_1 =  T_2 \pi_1\)).
Indeed, this is trivial when \(n \leq 2\). Assume the assertion holds for some \(n \geq 2\).
Let \(i\) and \(j\) belong to \(\cup_{k=1}^{n+1} \Delta_k\). In case 
\(i \in \cup_{k=1}^n \Delta_k\), then since \(b_i^*\) is supported by
\(\cup_{k=1}^n \Delta_k\), we obtain that 
\[b_i^* T_{n+1} e_j = b_i^* T_n \pi_n e_j = \delta_{i, j}\]
because of the induction hypothesis. If \(i \in \Delta_{n+1}\)
then \(b_i^* = (1/2) \gamma_i^* P_n + e_i^*\) and
\[b_i^* T_{n+1} e_j = (1/2) \gamma_i^* T_n \pi_n e_j + e_i^* e_j - (1/2) \gamma_i^* T_n \pi_n e_j =
\delta_{i, j} \]
completing the inductive step.
It follows now that for every \(n \in \mathbb{N}\) and all choices of scalars
\((a_i)_{i \in \cup_{k=1}^n \Delta_k} \), we have
\[\lambda^{-1} \sum_{i \in \cup_{k=1}^n \Delta_k} |a_i| \leq 
\| \sum_{i \in \cup_{k=1}^n \Delta_k} a_i b_i^* \| \leq (2^{-1} C + 1)
\sum_{i \in \cup_{k=1}^n \Delta_k} |a_i|\]
where \(C \) is the basis constant. This shows that \((b_n^*)\) is equivalent to the \(\ell_1\) basis.
Since for all \(n \in \mathbb{N}\),
the span of the \((b_i^*)\)'s, \({i \in \cup_{k=1}^n \Delta_k}\),
is contained in the span of the \((e_i^*)\)'s, \({i \in \cup_{k=1}^n \Delta_k}\),
we deduce from the linear independence of the \((b_i^*)\)'s,
that \([b_n^* : \, n \in \mathbb{N}] = [e_n^* : \, n \in \mathbb{N}]\).
\end{proof}
\begin{remark}
It is shown in \cite{LS} that if a \(\mathcal{L}_\infty\)
space has a separable dual, then this dual must be isomorphic to \(\ell_1\).
\end{remark}
\section{The construction of \(X\)} 
We shall inductively construct a sequence \((D_n)\) of subsets of
\(c_{00}\) and a decomposition of \(\mathbb{N}\) into a
sequence of successive intervals \((\Delta_n)\), so that 
\(D = \cup_n D_n\) induces a norm on \(c_{00}\) and \(X\) is the
completion of \(c_{00}\) under this norm. Moreover, the unit vector basis
\((e_n)\) of \(c_{00}\) will be a normalized shrinking basis for \(X\)
which, along with \((D_n)\) and \((\Delta_n)\), will fulfill the hypotheses
of Theorem \ref{T1}. It will then follow that \(X^*\) is isomorphic to
\(\ell_1\). 

Let 
\(\mathcal{F} = \{ F \subset \mathbb{N}: \, |F| \leq \min F + 2\} \cup \{\emptyset\}\).
This is a hereditary and pointwise compact family of finite subsets of \(\mathbb{N}\).
Note that \(F \in \mathcal{F}\) is a maximal, under inclusion, member of
\(\mathcal{F}\) precisely when \(|F| = \min F + 2\). Fix a scalar
\(0 < b < 1/4\).

We inductively construct a sequence \(\Delta_1 < \Delta_2 < \dots \)
of successive intervals of \(\mathbb{N}\) whose union is \(\mathbb{N}\)
and a sequence \((D_n)\) of subsets of \(c_{00}\) so that for all \(n \in \mathbb{N}\)
the following properties are satisfied:
\begin{enumerate}
\item \(e_i^* \in D_n \) for all \(i \in \Delta_n\).
\item \(\mathrm{ supp } \, d^* \subset \cup_{k=1}^n \Delta_k\)
and \(\max \mathrm{ supp } \, d^* \in \Delta_n \) for all \(d^* \in D_n\).
\item \(|\mathrm{ supp } \, d^* \cap \Delta_k | \leq 1\) for all \(d^* \in D_n\)
and \(k \leq n\).
\item \(d^*(i) \in \{b^k : \, k \in \mathbb{N}\} \cup \{0, 1\}\) for all \(i \in \mathbb{N}\)
and \(d^* \in D_n\).
\item \(\mathrm{ supp } \, d^* \in \mathcal{F}\) for all \(d^* \in D_n\).
\item If \(n \geq 3\) and \(i \in \Delta_n\), then there exists a unique
\(d^* \in D_n\) with \(|\mathrm{ supp } \, d^* | > 1\) and
 \(\max \mathrm{ supp } \, d^* = i\).
\item If \(n \geq 3\) and \(d^* \in D_n\) with \(|\mathrm{ supp } \, d^* | > 1\), then we may write
\[ d^* = d_1^* + b (d_2^* | \cup_{j=k + 1}^l \Delta_j ) + e_i^*\]
for some \(1 \leq k < l \leq n -1\), \(d_1^* \in D_k\), \(d_2^* \in D_l\)
and \(i \in \Delta_n\).
\end{enumerate}
Indeed, define \(\Delta_k =\{k\}\) and \(D_k = \{e_k^*\}\) for \(k \leq 2\).
Assume that \(\Delta_k\) and \(D_k\) have been defined for all \(k \leq n\).
To define \(D_{n+1}\) and \(\Delta_{n+1}\) we need to introduce some terminology.

If \(\xi^*\) and \(\eta^*\) are elements of \(\cup_{k=1}^n D_k\), then we say that
\((\xi^*, \eta^*)\) is a {\em linked pair } provided that there exist integers
\(1 \leq k < l \leq n\) with \(\xi^* \in D_k\), \(\eta^* \in D_l\)
and \(\mathrm{ supp } \, \xi^* \cup \mathrm{ supp } [ \eta^* | (\cup_{j=k+1}^l \Delta_j)]
\in \mathcal{F}\) without being a maximal element.  
Denote by
\(\Sigma_n\) the set of all possible linked pairs formed by elements of \(\cup_{k=1}^n D_k\).
Note that \( (e_i^*, e_j^*) \in \Sigma_n\) whenever \( i \in \Delta_k\),
\(j \in \Delta_l\) and \(1 \leq k < l \leq n\).
Let \(\Delta_{n+1}\) be the interval adjacent to \(\Delta_n\) having \(|\Sigma_n|\) elements. 
Let \(\sigma_n \colon \Sigma_n \to \Delta_{n+1}\) be an injection. Define
\begin{align}
D_{n+1} = \{ \xi^* + b \eta^* | (\cup_{i=k+1}^l \Delta_i) + e_{\sigma_n(\xi^*, \eta^*)}^* \, &: \notag \\
&(\xi^*, \eta^*) \in \Sigma_n\} \cup \{ e_i^* : \, i \in \Delta_{n+1}\} \notag
\end{align}
The inductive construction of \((D_n)\) and \((\Delta_n)\) is now complete.
It is straightforward to check that properties \((1)-(7)\) are satisfied.

Let \(D = \cup_n D_n\) and define a norm on \(c_{00}\) by
\[\|x\| = \sup \{ |\sum_i d^*(i) x(i)| : \, d^* \in D \}\]
This is indeed a norm because of \((1)\). 
\(X\) is the completion of \(c_{00}\) under this norm. Observe that, under natural
identification, \(D \subset B_{X^*}\).
\begin{Lem} \label{L1}
\((e_n)\) is a normalized, shrinking  basis for \(X\) and \(X^*\) is isomorphic to \(\ell_1\).
\end{Lem}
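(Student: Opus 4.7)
The plan proceeds in three stages: establishing that $(e_n)$ is a normalized Schauder basis of $X$, verifying the hypotheses of Theorem~\ref{T1} to obtain the $\ell_1$-equivalent sequence $(b_i^*)$ with $[(b_i^*)]=[(e_i^*)]$, and proving that $(e_n)$ is shrinking so that the final clause of Theorem~\ref{T1} delivers $X^*\cong\ell_1$.

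Normalization is immediate: $e_i^*\in D$ forces $\|e_i\|_X\geq 1$, and $|d^*(e_i)|\leq 1$ for $d^*\in D$ forces $\|e_i\|_X\leq 1$. For the basis property, observe that $d^*(P_N x)=(d^*|_{\leq N})(x)$ where $P_N$ projects onto $[e_i:i\in\bigcup_{k\leq N}\Delta_k]$, so it suffices to bound $\|d^*|_{\leq N}\|_{X^*}$ uniformly. I would induct on the level $m$ with $d^*\in D_m$, using the splitting of property~(7), $d^*=d_1^*+b(d_2^*|_{\bigcup_{j=k+1}^l\Delta_j})+e_i^*$, and casing on where $N$ sits: if $l\leq N<m$, then $d^*|_{\leq N}=d^*-e_i^*$, of norm at most~$2$; if $k\leq N<l$, rewriting $d^*|_{\leq N}=d_1^*+b(d_2^*|_{\leq N}-d_2^*|_{\leq k})$ and applying induction to both restrictions of $d_2^*$ yields at most $1+2bC$; if $N<k$, recurse on $d_1^*|_{\leq N}$. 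Taking $C=2$ and using $b<1/4$ gives the middle bound $1+4b<2$, closing the induction with $\|P_N\|\leq 2$. Intra-block projections have norm $1$ by property~(3) (as $\{e_i:i\in\Delta_k\}$ spans an isometric $\ell_\infty^{|\Delta_k|}$-copy), so the overall basis constant is at most $3$. Hypothesis~(1) of Theorem~\ref{T1} is the conjunction of construction properties~(1) and (6), and hypothesis~(2) with $\rho_1=1,\ \rho_2=b$ is exactly property~(7). Theorem~\ref{T1} then delivers that $X$ is $\mathcal{L}_\infty$ and that $[(b_i^*)]=[(e_i^*)]$ is a closed $\ell_1$-subspace of $X^*$.

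The principal obstacle is to show that $(e_n)$ is shrinking, equivalently that $[(e_i^*)]=X^*$. The plan is to prove that every normalized block sequence $(u_k)$ of $(e_n)$ is weakly null. For each fixed $d^*\in D$, one has $d^*(u_k)\to 0$ because $d^*$ has finite support while $\min\mathrm{supp}\,u_k\to\infty$. The delicate upgrade to weak null against arbitrary $f\in X^*$ requires the construction's specific structure. My strategy is first to show, via the Schreier bound from property~(5), that $X$ contains no isomorphic copy of $\ell_1$: for a suitably spread-out normalized block sequence $(y_n)$, the constraint $|\mathrm{supp}\,d^*|\leq\min\mathrm{supp}\,d^*+2$ forces $\|\sum a_n y_n\|_X$ to be dominated by a shifted Schreier coefficient norm, which cannot dominate $\sum|a_n|$. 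Rosenthal's $\ell_1$-theorem then yields that every bounded sequence in $X$ has a weakly Cauchy subsequence. Combining this with the remark after Theorem~\ref{T1} (separable $\mathcal{L}_\infty$ space with separable dual has $\ell_1$ dual) and an analysis of the $(b_i^*)$-structure from the proof of Theorem~\ref{T1}, I would conclude that $X^*$ is separable and equal to $[(b_i^*)]$. Shrinking of $(e_n)$ then follows, and the concluding clause of Theorem~\ref{T1} gives $X^*\cong\ell_1$, completing the lemma.
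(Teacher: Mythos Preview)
Your treatment of normalization and of the Schauder basis constant is essentially the paper's: an induction on the level of $d^*$ via the splitting in property~(7), yielding $\|d^*|_I\|\le 2$ for every initial interval $I$. (Your detour through ``intra-block $\ell_\infty$'' is unnecessary; property~(3) already gives that $d^*|_{[1,m]}$ and $d^*|_{\bigcup_{k<N}\Delta_k}$ differ in at most one coordinate of modulus $\le 1$, so the paper's bound of $2$ covers arbitrary initial segments directly.) The verification of the hypotheses of Theorem~\ref{T1} is also fine.

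The genuine gap is in your shrinking argument. You propose: (i) show $X\not\supset\ell_1$ via the Schreier-type bound from property~(5); (ii) invoke Rosenthal to get weakly Cauchy subsequences; (iii) ``combine with the remark after Theorem~\ref{T1}'' and the $(b_i^*)$-structure to conclude $X^*$ is separable. Step~(iii) is where the argument breaks. The remark you cite (Lewis--Stegall) has \emph{separable dual} as a hypothesis, not as a conclusion, so it cannot be used to deduce separability of $X^*$. And Rosenthal alone is not enough: knowing that a normalized block sequence $(u_k)$ has a weakly Cauchy subsequence only tells you its $w^*$-limit in $X^{**}$ annihilates $[e_n^*]$; concluding that this limit is $0$ requires $[e_n^*]=X^*$, which is precisely the shrinking statement you are trying to prove. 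There \emph{is} a nontrivial theorem (separable $\mathcal{L}_\infty$ without $\ell_1$ has $\ell_1$ dual) that would rescue your route, but you neither state nor cite it, and it is considerably heavier than what is needed here.

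The paper's argument is a one-line application of Rainwater's theorem that you missed. Since $D$ is norming, so is $\overline{D}^{w^*}$; property~(5) (together with the fact that $\mathcal{F}$ is hereditary and compact) forces every $d^*\in\overline{D}^{w^*}$ to have support in $\mathcal{F}$, hence finite. Thus $d^*(u_k)\to 0$ for every $d^*\in\overline{D}^{w^*}$ and every block sequence $(u_k)$, and Rainwater gives $(u_k)$ weakly null. This is the missing idea: exploit that the $w^*$-closure of the norming set still consists of finitely supported functionals.
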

\begin{proof}
Property \((4)\) implies that \(|d^*(i)| \leq 1\) for all
\(i \in \mathbb{N}\) and \(d^* \in D\). Combining this with \((1)\) we 
infer that \((e_n)\) is a normalized sequence in \(X\).

To show that \((e_n)\) is a Schauder basis for \(X\) it suffices proving that
\(\|d^* | I \| \leq 2\)
for all \(n \in \mathbb{N}\), \(d^* \in D_n\) and initial intervals \(I\) of \(\mathbb{N}\).
This is done by induction on \(n\), the cases \(n \leq 2\) being trivial.
Assuming that the assertion holds for some \(n \geq 2\), let \(d^* \in D_{n+1}\).
Property \((7)\) allows us write
\[ d^* = d_1^* + b (d_2^* | \cup_{j=k + 1}^l \Delta_j ) + e_i^*\]
for some \(1 \leq k < l \leq n\), \(d_1^* \in D_k\), \(d_2^* \in D_l\)
and \(i \in \Delta_{n+1}\). Let \(I\) be an initial interval of \(\mathbb{N}\).
If \(i \in I\), then \(d^* | I  = d^*\) and so \(\|d^* | I\| \leq 1\). Next assume
\(i \notin I\). If \(\max I \leq \max \Delta_k\), then \(d^* | I = d_1^* | I\)
and the assertion follows from the induction hypothesis applied on \(d_1^*\).
In case \(\max I > \max \Delta_k\), consider the interval \( J = I \cap \cup_{j=k + 1}^l \Delta_j \).
The induction hypothesis applied on \(d_2^*\) now yields that \(\|d_2^* | J \| \leq 4\).
Thus, 
\[ \|d^* | I \| \leq \|d_1^* \| + b \| d_2^* | J \| \leq 1 + (1/4)4 =2\]
completing the inductive step. Therefore, \((e_n)\) is a normalized Schauder basis
for \(X\).

Evidently, properties \((1)-(7)\) guarantee that \(X\),
\((e_n)\), \(D\) and \((\Delta_n)\) satisfy the hypotheses of Theorem \ref{T1}
and hence \(X\) is an \(\mathcal{L}_\infty\) space. We finally show that
\((e_n)\) is shrinking which in turn will give us that \(X^*\) 
is isomorphic to \(\ell_1\). To this end, it is sufficient to show that
every normalized block basis \((u_n)\) of \((e_n)\) is weakly null. We know, because
of \((5)\), that the supports of the elements of \(D\) belong to the hereditary and compact
family \( \mathcal{F}\). The same holds true for elements in \(\overline{D}^{w^*}\)
and so \(\lim_n d^* u_n  = 0\) for every \(d^* \in \overline{D}^{w^*}\).
We deduce from Rainwater's Theorem that \((u_n)\) is weakly null.
\end{proof}
We remark that because of \((4)\), \(\overline{D}^{w^*}\) is countable
and so \(X\) embeds isometrically into \(C(\omega^{\omega^\alpha})\) for some \(\alpha < \omega_1\)
(\cite{MS}, \cite{BP}).
In the next lemma we describe the smallest such \(\alpha\).
\begin{Lem} \label{L2}
\(X\) is isometric to a subspace of \(C(\omega^\omega)\).  
\end{Lem}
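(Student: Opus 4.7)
The plan is to compute the Cantor--Bendixson rank of $K := \overline{D}^{w^*} \cup \{0\} \subset B_{X^*}$ and show it equals $\omega + 1$ with a one-point top derivative. By Mazurkiewicz--Sierpi\'nski, $K$ will then be homeomorphic to $[1, \omega^\omega]$, and since $D$ is norming the canonical evaluation embeds $X$ isometrically into $C(K)$, which under the homeomorphism is isometric to $C(\omega^\omega)$.

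The cornerstone is the structural identity $d^*(\min \mathrm{supp}\, d^*) = 1$ for every $d^* \in D$. This follows by induction on the level $n$ using representation (7): since the three summands of $\gamma^* = d_1^* + b(d_2^* | \cup_{j=k+1}^l \Delta_j) + e_i^*$ have strictly successive supports, $\min \mathrm{supp}\, \gamma^* = \min \mathrm{supp}\, d_1^*$ and $\gamma^*$ agrees with $d_1^*$ at that coordinate. The identity persists under $w^*$-limits: if $d_n^* \to f$ with $f \neq 0$, then $\min \mathrm{supp}\, d_n^*$ must be bounded (otherwise $f = 0$), hence constantly equal to some $m$ along a subsequence, and $f(e_m) = 1$ forces $\min \mathrm{supp}\, f = m$.

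Consequently, if $f_n \to f$ in $K$ with $f \neq 0$, then $\min \mathrm{supp}\, f_n = \min \mathrm{supp}\, f =: n_0$ eventually (otherwise a constant-minimum subsequence would place the value $1$ at a coordinate outside $\mathrm{supp}\, f$), and hence $|\mathrm{supp}\, f_n| \leq n_0 + 2$ by property (5). An induction on $k$ now yields
\[
K^{(k)} \;=\; \{0\} \;\cup\; \bigl\{ f \in K \setminus \{0\} : |\mathrm{supp}\, f| \leq n_0 + 2 - k \bigr\}.
\]
The inclusion $\subseteq$ follows from the discreteness of the non-zero values in $\{b^j\} \cup \{1\}$: an $f$ with maximal support in $K^{(k-1)}$ is isolated there, since any distinct approximant must carry the same support and, by discreteness, the same values. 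For $\supseteq$, when $|\mathrm{supp}\, f| \leq n_0 + 2 - k$ one produces approximants $f + b\, e_{j_m}^* \in K^{(k-1)}$ with $j_m \to \infty$; these lie in $K$ because a $D$-approximant $\xi^*$ of $f$ can be paired with $e_{j_m}^*$ via $\Sigma_{n_m}$, producing $\gamma^* = \xi^* + b\, e_{j_m}^* + e_{\sigma(\xi^*, e_{j_m}^*)}^* \in D$ whose top marker can be pushed to infinity in the $w^*$-limit. Since $e_n^* \to 0$ keeps $0$ non-isolated at every finite stage, we conclude $K^{(\omega)} = \{0\}$ and $K^{(\omega+1)} = \emptyset$, so $K \cong [1, \omega^\omega]$.

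The main obstacle is the constructive half of the derivative formula: one must track support sizes and $\mathcal{F}$-membership through the inductive construction of $D$ to verify that the linked pairs $(\xi^*, e_j^*)$ needed to realize the extensions $f + b\, e_j^*$ actually appear in $\Sigma_{n_m}$ for all sufficiently large $j$ and appropriate $n_m$. This is essentially a bookkeeping check using that $\mathrm{supp}\, \xi^* \cup \{j\}$ remains non-maximal in $\mathcal{F}$ as long as $f$ itself is non-maximal; after this is carried out, the topological conclusion follows cleanly from Mazurkiewicz--Sierpi\'nski.
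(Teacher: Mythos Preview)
Your route differs from the paper's: rather than analysing $K=\overline{D}^{w^*}\cup\{0\}$ directly, the paper embeds $\overline{D}^{w^*}$ into the auxiliary space $R*\mathcal{F}=\{g\colon\mathbb{N}\to R:\mathrm{supp}\,g\in\mathcal{F}\}$ with $R=\{b^k:k\in\mathbb{N}\}\cup\{0,1\}$, proves $(R*\mathcal{F})^{(\omega)}=\{\vec 0\}$ by an easy induction, and concludes that $\overline{D}^{w^*}$ is homeomorphic to a closed subset of $[1,\omega^\omega]$. No information about which functionals actually occur in $K$ is required. Your structural observation $d^*(\min\mathrm{supp}\,d^*)=1$ and its $w^*$-stability is correct and pleasant, and your $\subseteq$ inclusion for $K^{(k)}$ is valid; this alone already yields $K^{(\omega)}\subseteq\{0\}$, which is all the lemma needs.

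The $\supseteq$ half, however, has a genuine gap. You claim $f+be_j^*\in K$ by pairing a $D$-approximant $\xi^*$ of $f$ with $e_j^*$. But when $f\notin D$, every $D$-approximant $\xi^*$ carries an extra top-marker coordinate, so typically $|\mathrm{supp}\,\xi^*|=|\mathrm{supp}\,f|+1$; your assertion that ``$\mathrm{supp}\,\xi^*\cup\{j\}$ remains non-maximal as long as $f$ is non-maximal'' is therefore unjustified. Worse, $\max\mathrm{supp}\,\xi^*$ lies in an arbitrarily high $\Delta$-block, so for $(\xi^*,e_j^*)$ to be a linked pair $j$ must be pushed still higher, and the two limits (approximating $f$, then sending the marker to infinity) do not commute in the way you need. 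Concretely, it is not at all clear that $e_3^*+be_{j_1}^*+be_{j_2}^*\in K$ for fixed $j_1<j_2$: every attempt via the inductive description of $D$ leaves a stray unit coefficient that cannot be removed while $j_2$ stays put. Fortunately the lower bound is unnecessary: drop the claim $K\cong[1,\omega^\omega]$ and conclude instead that $K$ embeds as a closed subset of $[1,\omega^\omega]$, which already gives $X\hookrightarrow C(K)\hookrightarrow C(\omega^\omega)$ isometrically.
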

\begin{proof}
We first introduce some notation. Let \(R\) be a countable, compact subset
of non-negative reals having \(0\) as a unique cluster point. Given a hereditary and
pointwise compact family \(\mathcal{G}\) of finite subsets of \(\mathbb{N}\), we set
\[ R * \mathcal{G} = \{f \colon \mathbb{N} \to R : \, \mathrm{ supp } \, f \in \mathcal{G} \} 
\cup \{\vec{0} \}\]
where \(\vec{0}\) stands for the zero function.
Endowing \( K = R^{\mathbb{N}}\) with the cartesian topology, we see that
\( R * \mathcal{G}\) is a countable, compact subset of \(K\).

Given \(n \in \mathbb{N}\), let \(\mathcal{A}_n = \{F \subset \mathbb{N} : \, |F| \leq n\}\).
Also let \(\mathcal{A}_\omega = \mathcal{F}\). It is well known that for all ordinals \(\xi \leq \omega\),
\(\mathcal{A}_\xi \)
is a hereditary and pointwise compact family of finite subsets of \(\mathbb{N}\) 
such that \({\mathcal{A}_\xi}^{(\xi)} = \{\emptyset\}\).

{\em Claim}: \((R * \mathcal{A}_\xi)^{(\xi)} = \{\vec{0}\}\)
for all \(\xi \leq \omega\).

We first prove the claim when \(\xi = n < \omega\), by induction on \(n \in \mathbb{N}\).
When \(n =1\), the proof is straightforward since 
\[R * \mathcal{A}_1 = \{ r \chi_{\{k\}} : \, r \in R, \, k \in \mathbb{N} \}\] 
Assume the assertion is true for some \(n \in \mathbb{N}\). 
Notice that for each \(k \leq n\) we have that 
\[R * \mathcal{A}_k = \{ f \in R * \mathcal{A}_n : \, |\mathrm{ supp } \, f | \leq k \} \cup \{\vec{ 0}\}\]
It is also easy to see that the set of the isolated points of \(R * \mathcal{A}_n\) consists precisely of
functions in \(R * \mathcal{A}_n\) whose support contains exactly \(n\) points. It follows now that
\[ (R * \mathcal{A}_n)^{(1)} =  R * \mathcal{A}_{n-1}\]
and hence \((R * \mathcal{A}_n)^{(n)} = \{\vec{0}\}\) because of the induction hypothesis.
(Here we made use of the following easy fact: Suppose \(A\) is a countable compact metric space, \(B \)
a closed subset of \(A\) with \(B^{(m)} = \{t\}\) for some \(m \in \mathbb{N}\) and
\(t \in B\). If \(A \setminus B\) is the set of the isolated points of \(A\), then
\(A^{(m + 1)} = \{t\}\)).

We next prove the claim for \(\xi = \omega\). For every \(n \in \mathbb{N}\) set
\[ L_n = \{ f \in R * \mathcal{A}_\omega : \, |\mathrm{ supp } \, f | \leq n \} \cup \{\vec{ 0}\}\]
Of course, \(L_n\) is a closed subset of \(R * \mathcal{A}_\omega\) 
and \(L_n \subset R * \mathcal{A}_n\) for all \(n \in \mathbb{N}\).
It is also clear that \(R * \mathcal{A}_\omega = \cup_{n=1}^\infty L_n\).
We now observe that if \((f_n)\) is a sequence in \(R * \mathcal{A}_\omega\) converging
to \(f \in R * \mathcal{A}_\omega\) and there exist integers \(k_1 < k_2 < \dots\) with
\(f_n \in L_{k_{n+1}} \setminus L_{k_n} \), for all \(n \in \mathbb{N}\), then
\(f = \vec{0}\).
Indeed, if \(d_n = \min \mathrm{ supp } \, f_n \), then \(d_n > k_n -2\) as
\(\mathrm{ supp } \, f_n \in \mathcal{F}\) for all \(n \in \mathbb{N}\).
It follows that \(\lim_n d_n = \infty\) and so \(f = \vec{0}\).

We conclude from the above that if \(L\) is a closed subset of 
\(R * \mathcal{A}_\omega\) and \(\vec{0} \notin L\), then
\(L \subset L_n\) for some \(n \in \mathbb{N}\). Since the claim holds when \(\xi = n\)
we obtain that \(L^{(n+1)} = \emptyset \) . We deduce from this
that \((R * \mathcal{A}_\omega)^{(\omega)} \subset  \{\vec{0}\}\).
For the reverse inclusion, fix some \(r > 0\) in \(R\) and notice that for all \(n \in \mathbb{N}\)
\[M_n = \{ f \in R * \mathcal{A}_\omega : \, f(n) = r, \, n = \min \mathrm{ supp } \, f \}\]
is homeomorphic to \(R * \mathcal{A}_{n + 1}\) with \(M_n^{(n+1)} = \{r \chi_{\{n\}} \}\). 
Therefore, \( r \chi_{\{n\}} \in (R * \mathcal{A}_\omega)^{(n+1)}\) for all 
\(n \in \mathbb{N}\) and thus \( \vec{0} \in (R * \mathcal{A}_\omega)^{(\omega)} \).
This completes the proof of the claim.

We are now able to complete the proof of the lemma. To this end, let
\(R = \{ b^n : n \in \mathbb{N}\} \cup \{0,1\}\). We define a map
\(\phi \colon \overline{D}^{w^*} \to R * \mathcal{F} \) 
by the rule 
\[\phi d^* = \sum_{ n \in \mathrm{ supp } \, d^*} d^*(n) \chi_{\{n\}}\]
for all \(d^* \in \overline{D}^{w^*}\) (\( \phi 0 = \vec{0}\)).
Properties  \((4)\) and \((5)\) of \(D\) yield that \(\phi\) is well defined, injective and
continuous when \(\overline{D}^{w^*}\) is endowed the \(w^*\)-topology. It follows now
that \(\overline{D}^{w^*}\) is \(w^*\)-homeomorphic to a subset of \([1, \omega^\omega]\)
and so \(X\) is isometric to a subspace of \(C(\omega^\omega)\).
\end{proof}
In the sequel, if \(u = \sum_{n=1}^\infty a_n e_n \) is a vector in \(X\), we set
\(\|u\|_{c_0} = \sup_n |a_n|\).
\begin{Lem} \label{L3}
Let \((u_n)\) be a normalized block basis of \((e_n)\). Then the following
properties hold:
\begin{enumerate}
\item If \(\lim_n \|u_n\|_{c_0} = 0\), then 
some subsequence of \((u_n)\) is equivalent to the \(c_0\) basis.
\item If there is some \(\delta > 0\) so that \(\|u_n\|_{c_0} > \delta\)
for all \(n \in \mathbb{N}\), then some subsequence of \((u_n)\)
admits \(\ell_1\) as a spreading model.
\end{enumerate}
\end{Lem}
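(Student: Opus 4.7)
The plan is to pass to adapted subsequences of $(u_n)$ and exploit the combinatorial properties (3), (5), and (7) of the norming set $D$. In each part the easy direction---the lower bound via the block basis constant for Part (1), the triangle inequality for Part (2)---is immediate, so the work lies in the non-trivial direction.

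For Part (1), I would pass to a subsequence $(u_{n_k})$ with $\|u_{n_k}\|_{c_0}$ decreasing rapidly and with supports well-separated. For any $d^* \in D$ I would apply the estimate
\[
\Big|d^*\Big(\sum_k a_k u_{n_k}\Big)\Big| \leq \max_k|a_k| \sum_k |\mathrm{supp}\, d^* \cap \mathrm{supp}\, u_{n_k}|\, \|u_{n_k}\|_{c_0}.
\]
Property (3) gives $|\mathrm{supp}\, d^* \cap \Delta_m| \leq 1$ for each $m$, and property (5) yields $|\mathrm{supp}\, d^*| \leq \min \mathrm{supp}\, d^* + 2 \leq \max \mathrm{supp}\, u_{n_{k^*}} + 2$, where $k^*$ is the smallest index of $k$ for which $u_{n_k}$ meets $d^*$. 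Combined with the monotone decay of $\|u_{n_k}\|_{c_0}$ along the subsequence, the right-hand side reduces to an expression of the form $\|u_{n_{k^*}}\|_{c_0}\cdot(\max \mathrm{supp}\, u_{n_{k^*}} + 2)$, which a suitable choice of subsequence renders uniformly bounded, establishing equivalence to the $c_0$ basis.

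For Part (2), pass to a subsequence with $u_{n_k}(i_{n_k}) > \delta$ at chosen indices $i_{n_k}$ (with uniform sign by a standard sign-passage), and arrange $\mathrm{supp}\, u_{n_k} \subset \Delta_{p_k}$ with $(p_k)$ strictly increasing. By property (3), any $d^* \in D$ whose unique support position in $\Delta_{p_k}$ is $i_{n_k}$ with coefficient $1$ (a spine position) satisfies $d^*(u_{n_k}) = u_{n_k}(i_{n_k}) > \delta$. For a Schreier-admissible selection $\{i_{n_{k_l}}\}_{l=1}^L$, I would build $d^* \in D$ iteratively via property (7): starting from $\gamma_{i_{n_{k_1}}}^*$ and at each step forming a linked pair $(d^*_{\mathrm{prev}}, \eta)$ whose code $\sigma(d^*_{\mathrm{prev}}, \eta)$ equals the next target $i_{n_{k_{l+1}}}$. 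The resulting $d^* \in D$ has each $i_{n_{k_l}}$ as a spine position, so $d^*(\sum a_l u_{n_{k_l}}) \geq \delta \sum a_l$ for positive $a_l$. Splitting a general coefficient sequence into its positive and negative parts gives $\|\sum a_l u_{n_{k_l}}\| \geq (\delta/2)\sum|a_l|$, the desired $\ell_1$-spreading lower bound.

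The main obstacle is the combinatorial matching in Part (2): the injection $\sigma_n\colon \Sigma_n \to \Delta_{n+1}$ is prescribed by the construction, so the new spine position produced at each linking step is determined rather than freely chosen. I would address this via a diagonal extraction from $(i_n)$: since the collection of $\eta$ admissible with a given $\xi^* \in D_l$ is substantial and $|\Delta_{l+1}| = |\Sigma_l|$ grows rapidly, the reachable codes from $\xi^*$ form a sizable subset of $\Delta_{l+1}$. At each step one selects the next target $i_{n_{k_{l+1}}}$ from this reachable set while preserving $|u_{n_{k_{l+1}}}(i_{n_{k_{l+1}}})| > \delta$ and Schreier admissibility of the resulting finite selection; the pool of candidate indices being infinite, the extraction is feasible, though care is needed to maintain compatibility with $\sigma$ throughout all linking steps.
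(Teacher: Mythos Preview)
Your Part~(1) sketch has a quantitative slip. The expression \(\|u_{n_{k^*}}\|_{c_0}\bigl(\max\mathrm{supp}\,u_{n_{k^*}}+2\bigr)\) cannot be made uniformly bounded by passing to a subsequence: once \(u_{n_{k^*}}\) is fixed, both factors are determined, and along the original sequence \(\|u_n\|_{c_0}\) may decay far more slowly than \(1/\max\mathrm{supp}\,u_n\). The fix is to estimate the first block differently: since \(\|d^*\|\le 1\) and \(\|u_{n_{k^*}}\|=1\), one has \(|d^*u_{n_{k^*}}|\le 1\) directly. The coordinate estimate \(|d^*u_{n_k}|\le|\mathrm{supp}\,d^*|\cdot\|u_{n_k}\|_{c_0}\le(\max\mathrm{supp}\,u_{n_{k-1}}+2)\,\|u_{n_k}\|_{c_0}\) is then applied only for \(k>k^*\), and the subsequence is chosen so that \(\|u_{n_k}\|_{c_0}<2^{-\max\mathrm{supp}\,u_{n_{k-1}}}\); this is exactly the paper's argument.

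Your Part~(2) strategy has a structural gap that the diagonal extraction does not close. You want each spine coordinate \(\sigma(d^*_{\mathrm{prev}},\eta)\) to land on a position \(i_{n_{k_{l+1}}}\) where some block has a coefficient exceeding \(\delta\). But \(\sigma\) is fixed before the block sequence \((u_n)\) is given, so for a prescribed \(d^*_{\mathrm{prev}}\) the set of reachable codes \(\{\sigma_n(d^*_{\mathrm{prev}},\eta):\eta\}\), while infinite, need not meet the set \(\{i:\exists m,\ |u_m(i)|>\delta\}\) at all (take \(u_m=e_{j_m}\) with each \(j_m\) chosen so that \(\sigma^{-1}(j_m)\) has first coordinate different from any functional you could have built). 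Your side assumption \(\mathrm{supp}\,u_{n_k}\subset\Delta_{p_k}\) is also not generally attainable.

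The paper avoids this entirely by reversing the roles of the two free slots in the splitting \(d^*=d_1^*+b(d_2^*|\cdots)+e_t^*\). Instead of forcing the spine term \(e_t^*\) to hit the next block, one takes \(d_2^*=e_{i_{m_1}}^*\) (which is always in \(D\)) so that the \(b\)-weighted middle term hits \(u_{m_1}\); the spine index \(t=\sigma(d_1^*,e_{i_{m_1}}^*)\) is then allowed to fall wherever \(\sigma\) dictates, and one simply chooses \(t\) to lie in a gap beyond \(\mathrm{supp}\,u_{m_1}\) (by picking the level \(l_1\) large enough). The price is a factor of \(b\): one gets \(|d^*u_n|\ge\delta b\) on the selected set \(F\), which is enough for the \(\ell_1\) spreading model after a Ramsey/diagonal pass. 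The freely choosable input is \(e_{i_{m_1}}^*\), not the code; that is the missing idea.
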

\begin{proof}
To prove \((1)\) let \(k_n = \max \mathrm{ supp } \, u_n \) for all \(n \in \mathbb{N}\).
Passing to a subsequence of \((u_n)\) if necessary, using the fact that \(\lim_n \|u_n\|_{c_0} = 0\),
we may assume without loss of generality, that 
\[ \|u_n \|_{c_0} < 1/2^{k_{n-1}}, \, \forall \, n \geq 2\]
If \(d^* \in D\), let \(m\) denote the smallest integer in the support of \(d^*\).
Note that \(| \mathrm{ supp } \, d^* | \leq m+2\), thanks to \((5)\). Next, let \(n_1\)
be the smallest integer \(n\) for which \( \mathrm{ supp } \, u_n\) intersects
\(\mathrm{ supp } \, d^* \). It follows that \(m \leq k_{n_1}\) and therefore
\begin{align}
\sum_{n=1}^\infty |d^*u_n | &\leq |d^* u_{n_1} | + \sum_{n= n_1 + 1 }^\infty |d^*u_n | 
\leq 1 + \sum_{n= n_1 + 1 }^\infty (m+2)/2^{k_{n-1}} \notag \\ 
&\leq 1 + \sum_{n= n_1 + 1 }^\infty (k_{n-1}+2)/2^{k_{n-1}} 
\leq 1 + \sum_{n=1}^\infty (n+2)/2^n \notag
\end{align}
We conclude that \((u_n)\) is equivalent to the \(c_0\) basis.
For the second part of this lemma, let \(i_n \in \mathrm{ supp } \, u_n \)
satisfy \( |e_{i_n}^* u_n | \geq \delta\) for all \(n \in \mathbb{N}\).

{\em Claim}: For every \(M \in [\mathbb{N}]\) and every \(k \in \mathbb{N}\)
there exist \(F \subset M\) with \(|F| = k\) and \(d^* \in D\)
with \(\min M \leq \min \mathrm{ supp } \, d^* \) and \(| \mathrm{ supp } \, d^* | \leq 2k +1 \), so that
\(\mathrm{ supp } \, d^* \cap \mathrm{ supp } \, u_n = \{i_n\}\) and
\(|d^* u_n | \geq \delta b \) 
for all \(n \in F\).

We prove the claim by induction on \(k\). The case \(k=1\) is trivial
as \(e_j^* \in D\) for all \(j \in \mathbb{N}\).
Assume that \(k \geq 2\) and that the claim holds for \(k-1\). 
Without loss of generality assume that \(\min M > 2k +1 \).
The induction hypothesis yields some \(d_1^* \in D\) and \(F_1 \subset M\)
with \(\min M \leq \min \mathrm{ supp } \, d_1^* \), \(| \mathrm{ supp } \, d_1^* | \leq 2k -1 \),
\(|F_1| = k-1\), \(\mathrm{ supp } \, d_1^* \cap \mathrm{ supp } \, u_n = \{i_n\}\)
and \(|d_1^* u_n | \geq \delta b \)  
for all \(n \in F_1\). Let \(m = \max F_1\). Choose \(l \in \mathbb{N}\) so that both
\(\mathrm{ supp } \, d_1^*\) and \(\mathrm{ supp } \, u_m \) are contained in
\(\cup_{j=1}^l \Delta_j \). We next choose \( m_1 \in M\), \(m_1 > m\), so that
\(\min \mathrm{ supp } \, u_{m_1} > \max \Delta_l\). It follows now that
\((d_1^* , e_{i_{m_1}}^*)\) is a linked pair. Choose \(l_1 \in \mathbb{N}\)
so that \(\min \Delta_{l_1} > \max \mathrm{ supp } \, u_{m_1}\). We may now find
some \(t \in \Delta_{l_1}\) so that
\[ d^* = d_1^* + b e_{i_{m_1}}^* + e_t^* \in D\]
Evidently, \(d^*\) and \(F = F_1 \cup \{m_1\}\) satisfy the claim for \(k\).

We deduce now from the preceding claim that for every \(k \in \mathbb{N}\) and
every subsequence \((u_{m_n})\) of \((u_n)\), there exist \(d^* \in D\)
and indices \(m_{i_1} < \dots < m_{i_k}\) so that \(|d^* u_{m_{i_j}} | \geq \delta b\)
for all \(j \leq k\). An application of Ramsey's theorem and a diagonalization argument 
finally yield a subsequence \((u_{k_n})\) of \((u_n)\)
so that for every \(F \subset \mathbb{N}\) with \(|F| \leq \min F\) there exists \(d^* \in D\)
such that \(|d^* u_{k_n} | \geq \delta b\) for all \(n \in F\). Therefore, some subsequence of
\((u_{k_n})\) admits \(\ell_1\) as a spreading model.
\end{proof}
\begin{proof}[Proof of Theorem \ref{mT}]
The first part of the theorem follows directly from Lemmas \ref{L1} and \ref{L2}.
For the second part, let \((x_n)\) be a normalized weakly null sequence in \(X\).
By passing to a subsequence if necessary, we may assume without loss of generality,
that \((x_n)\) is equivalent to a normalized block basis \((u_n)\) of \((e_n)\).
Then either \(\lim_n \|u_n \|_{c_0} = 0\), in which case \((1)\) of Lemma \ref{L3}
implies that some subsequence of \((x_n)\) is equivalent to the \(c_0\) basis,
or, there is some \(\delta > 0\) and a subsequence \((u_{k_n})\) of \((u_n)\)
so that \(\|u_{k_n} \|_{c_0} \geq \delta\) for all \(n \in \mathbb{N}\).
It follows now by \((2)\) of Lemma \ref{L3}, that some subsequence of \((x_n)\)
admits \(\ell_1\) as a spreading model.
It is shown in \cite{gow}, that every normalized weakly null sequence in \(C(\omega^\omega)\)
which admits \(\ell_1\) as a spreading model, has a subsequence equivalent to a
subsequence of the natural basis of Schreier's space. Since \(X\) is isometric to
a subspace of \(C(\omega^\omega)\), we conclude that \((x_n)\) has a subsequence equivalent to
a subsequence of the natural basis of Schreier's space. 
We finally note that \((e_n)\) clearly satisfies \((2)\) of Lemma \ref{L3} and so it has 
a subsequence equivalent to
a subsequence of the natural basis of Schreier's space. 
\end{proof}


\begin{thebibliography}{99}
\bibitem{A1}
D. Alspach, {\em Quotients of \(C[0,1]\) with separable dual}, Israel J. Math.
{\bf 29} (1978), 360--384.

\bibitem{A2}
D. Alspach, {\em A quotient of \(C(\omega^\omega)\) which is not isomorphic to a subspace of 
\(C(\alpha)\), \(\alpha < \omega_1\)}, Israel J. Math. {\bf 35} (1980), 49--60.

\bibitem{AB}
D. Alspach and Y. Benyamini, {\em A geometrical property of \(C(K)\) spaces}, Israel J. Math.
{\bf 64} (1988), 179--194.

\bibitem{AJO}
D. Alspach, R. Judd and E. Odell, {\em The Szlenk and local \(\ell_1\) indices},
Positivity {\bf 9} (2005), 1--44.

\bibitem{AH}
S.A. Argyros and R. Haydon, {\em A hereditarily indecomposable \(\mathcal{L}_\infty\)
space that solves the scalar-plus-compact problem}, Acta Math. {\bf 206} (2011), 1--54.

\bibitem{AK}
S.A. Argyros and V. Kanellopoulos, {\em Determining \(c_0\) in \(C(K)\) spaces}, Fund. Math. {\bf 187} 
(2005), 61--93.

\bibitem{Be}
Y. Benyamini, {\em An extension theorem for separable Banach spaces}, Israel J. Math. {\bf 29} (1978), 24--30.

\bibitem{BL}
Y. Benyamini and J. Lindenstrauss, {\em A predual of \(\ell_1\) which is not isomorphic to a \(C(K)\) space},
Israel J. Math. {\bf 13} (1972), 246--254.

\bibitem{BP}
C. Bessaga and A. Pelczynski, {\em Spaces of continuous functions IV}, Studia Math. {\bf 19}
(1960), 53--62.

\bibitem{BD}
J. Bourgain and F. Delbaen, {\em A class of special \(\mathcal{L}_\infty\) spaces}, Acta Math. {\bf 145}
(1980), 155--176.

\bibitem{FOS}
D. Freeman, E. Odell and Th. Schlumprecht, {\em The universality of \(\ell_1\) as a dual space},
Math. Ann. {\bf 351} (2011), 149--186.

\bibitem{gow}
I. Gasparis, E. Odell and B. Wahl,  {\em Weakly null sequences in the Banach space \(C(K)\)},
in Methods in Banach space theorey, J. Castillo and W. B. Johnson Editors,
London Mathematical Society Lecture Note Series no. 337, Cambridge University Press 2006.

\bibitem{JZ}
W.B. Johnson and M. Zippin, {\em On subspaces of quotients of \((\sum G_n)_{\ell_p}\)
and \((\sum G_n)_{c_0}\)}, Israel J. Math. {\bf 13} (1972), 311-316.

\bibitem{LS}
D. Lewis and C. Stegall, {\em Banach spaces whose duals are isomorphic to \(\ell_1(\Gamma)\)},
J. Funct. Anal. {\bf 12} (1973), 177--187.

\bibitem{LP}
J. Lindenstrauss and A. Pelczynski, {\em Absolutely summing operators in \(\mathcal{L}_p\)
spaces and their applications}, Studia Math. {\bf 29} (1968), 275--326.

\bibitem{LR}
J. Lindenstrauss and H.P. Rosenthal, {\em The \(\mathcal{L}_p\) spaces}, Israel J. Math.
{\bf 7} (1969), 325--349.

\bibitem{LT}
J. Lindenstrauss and L. Tzafriri, {\em Classical Banach spaces I-II}, Springer-Verlag, New-York (1977).

\bibitem{MS}
S. Mazurkiewicz and W. Sierpinski, {\em Contributions a la topologie des ensembles denombrables},
Fund. Math. {\bf 1} (1920), 17--27.

\bibitem{O}
E. Odell, {\em Ordinal indices in Banach spaces}, Extracta Math. {\bf 19} (2004), 93--125.

\bibitem{R1}
H.P. Rosenthal, {\em On factors of \(C[0,1]\) with non-separable dual}, Israel J. Math. {\bf 13} (1972),
361--378.

\bibitem{R2}
H.P. Rosenthal, {\em The Banach spaces \(C(K)\)}, in: W.B. Johnson, J. Lindenstrauss (Eds), Handbook of the
Geometry of Banach Spaces, vol. 2, North-Holland, 2003, pp. 1547--1602.

\bibitem{Sam}
C. Samuel, {\em Indice de Szlenk des \(C(K)\) (K espace topologique denombrable)}, Seminar on the
geometry of Banach spaces vol. I, II (Paris 1983), Publ. Math. Univ. Paris VII {\bf 18} (1984),
81--91.

\bibitem{Sch}
J. Schreier, {\em Ein Gegenbeispiel zur theorie der schwachen konvergenz}, Studia Math. {\bf 2}
(1930), 58--62.

\bibitem{Szl}
W. Szlenk, {\em The non existence of a separable reflexive Banach space universal for
all separable reflexive Banach spaces}, Studia Math. {\bf 30} (1968), 53--61. 
\end{thebibliography}
\end{document}